\renewcommand{\H}{{\mathbf H}}                   %Hypercohomology
\newcommand{\C}{{\mathbf C}}                   %Complex line
\newcommand{\A}{{\mathbf A}^1}                   %Affine complex line
\renewcommand{\P}{\mathbf{P}^1}                %Riemann sphere 
\newcommand{\E}{{\mathcal E}}                  %Underlying holomorphic bundle
\newcommand{\N}{{\mathcal N}}                  %Underlying meromorphic bundle
\renewcommand{\O}{{\mathcal O}}                 %Holomorphic functions
\newcommand{\Om}{{\mathbf \Omega}}                 %Holomorphic forms
\newcommand{\Mod}{{\mathcal M}}               %Moduli space
\renewcommand{\d}{\mbox{d}}                      %Differential
\newcommand{\isom}{\cong}
\renewcommand{\to}{\longrightarrow}
\newcommand{\dbar}{\bar{\partial}}
\newcommand{\End}{{\mathcal E}nd}         %End-sheaf
\newcommand{\Hom}{{\mathcal H}om}         %Hom-sheaf
\renewcommand{\Om}{{\mathbf \Omega}}        %holomorphic forms
\renewcommand{\L}{{\mathcal L}}           %Fuchsian equation 
\DeclareMathOperator{\coker}{coker}       %coker
\DeclareMathOperator{\tr}{tr}             %trace
\DeclareMathOperator{\diag}{diag} 
\DeclareMathOperator{\res}{res} 
\DeclareMathOperator{\Sl}{Sl}
\DeclareMathOperator{\ad}{ad}
\DeclareMathOperator{\para}{par}
\DeclareMathOperator{\iso}{iso}
\newtheorem{prop}{Proposition}[section]
\newtheorem{cond}[prop]{Condition}
\newtheorem{rk}[prop]{Remark}
\newtheorem{lem}[prop]{Lemma}
\newtheorem{cor}[prop]{Corollary}
\newtheorem{thm}[prop]{Theorem}
\title[Deformations of Fuchsian equations]{Deformation theory of Fuchsian equations and logarithmic connections}
\author[Szil\'ard Szab\'o]{Szil\'ard Szab\'o }
\thanks{Department of Geometry, Technical University,
  Egry J. u. 1/H,
  Budapest 1111, Hungary,
  \texttt{szabosz@math.bme.hu}}%, tel: 003614631750}
\date{\today}
\begin{document}

\begin{abstract}
Motivated by a remark and a question of Nicholas Katz, we characterize the tangent space of the 
space of Fuchsian equations with given generic exponents inside the corresponding 
moduli space of logarithmic connections: we construct a weight $1$ Hodge structure on the 
tangent space of the moduli of logarithmic connections such that deformations of 
Fuchsian equations correspond to the $(1,0)$-part. 
\end{abstract}

\maketitle

\section{Statement of results}
Let $p_1,\ldots ,p_n\in \P$ be $n \geq 2$ fixed points in the affine part of the complex projective line, 
and let $p_{0}$ be the point at infinity. 
Define $P$ to be the simple effective divisor $p_0 + \cdots + p_n$ in $\P$, and let $P^o=p_1 + \cdots + p_n$. 
Consider the function 
\begin{equation}\label{psi}
    \psi(z) = (z-p_1)\cdots (z-p_n)
\end{equation}
as an identification between $\O_{\P}$ and $\O_{\P}(-P^o)$ on the affine $\A=\P\setminus \{p_0 \}$. 

For $w=w(z)$ a holomorphic function of the complex variable $z$, let $w^{(k)}$ stand for its $k$-th order 
(anti-holomorphic) differential with respect to $z$ and let 
\begin{equation}\label{equation}
   w^{(m)}-\frac{G_1(z)}{\psi}w^{(m-1)}-\cdots -\frac{G_m(z)}{\psi^m}w=0,
\end{equation}
where the $G_k$ are polynomials in $z$, be a Fuchsian differential equation. 
Recall that this means that all the solutions $w$ grow at most polynomially 
with $(z-p_j)^{-1}$ near $p_j$ for any $1 \leq j \leq n$ (respectively, solutions grow at most 
polynomially with $z$ near infinity), in any given finite sector centered at the punctures. 
The left-hand side of (\ref{equation}) is a linear differential operator of the order $m$ of $w$, 
that we shall denote by $\L$. We shall identify the equation (\ref{equation}) with the operator $\L$. 
A classical result of I.L.Fuchs states that a necessary and sufficient
condition on the degrees of the $G_k$'s for $\L$ to be of Fuchsian type is
that the degree of $G_k$ has to be at most $k(n-1)$. 

Let us introduce the expressions 
\begin{align}
    w_1 & = w \notag \\
    w_2 & = \psi \frac{\d w}{\d z} \notag \\
        & \vdots \label{extension} \\
    w_m & = \psi^{m-1} \frac{\d^{m-1} w}{\d z^{m-1}}\notag . 
\end{align}
We may think of $w$ as a local section of any line bundle $L$ over $\P$ fixed in advance, 
for instance the structure sheaf $\O$. 
Then, on the affine open $\A$ the function $w_k$ is meromorphic with zeroes of order at least $(k-1)$ in $P$. 
In other words, on the affine part $\A$ a vector $(w_1,\ldots,w_m)$ is a section of the 
holomorphic vector bundle 
$$
    \tilde{E}= \O_{\P} \oplus \O_{\P}(-P^o) \oplus \ldots \oplus \O_{\P}((1-m)P^o). 
$$ 
We equip $\tilde{E}$ with the algebraic integrable connection with logarithmic poles at $P$ 
\begin{equation}\label{conn}
    D_{\L}=\d^{1,0}-\frac{A(z)}{\psi(z)} \d z, 
\end{equation}
where $A(z)$ is the {\em modified companion matrix} 
\begin{equation}\label{a}
    A  =\begin{pmatrix}
             0 & 0& 0 & 0 &\ldots \ldots & G_m \\
          1 & \psi ' & 0& 0 & \ldots \ldots &G_{m-1} \\
          0 & 1 & 2 \psi ' & 0 & \ldots \ldots &G_{m-2} \\
          \vdots  & \vdots & \vdots &   &  & \vdots  \\
          0 & 0  & 0 & \ldots   &  (m-2)\psi '& G_2 \\
          0 & 0 & 0 & \ldots & 1 & G_1 + (m-1)\psi ' 
      \end{pmatrix}
\end{equation}
of equation (\ref{equation}). Here we have denoted $\psi '=\d \psi /\d z$. 
One readily checks that a meromorphic function $w$ on some open set $U\subset \A$ with poles at most in $P$ 
locally solves (\ref{equation}) if and only if the vector $(w_1=w,w_2,\ldots ,w_m)$ is a parallel section of 
$D_{\L}$ on $\tilde{E}$ over $U$ for some (hence, only one) vector $(w_2,\ldots ,w_m)$. 

\begin{rk}
Instead of the formulae (\ref{extension}), we could have simply chosen
$w_k=w^{(k)}$, and the form of the same connection $D_{\L}$ in this
trivialisation would then be a usual companion matrix. The reason for 
our choice for the extension (\ref{extension}) is that it gives rise to 
a \emph{logarithmic} lattice. In fact, the two points of view are equivalent, so 
that a connection in modified companion form is also induced by an equation. 
\end{rk}

By assumption, the integrable connection $D_{\L}$ is regular at infinity as well. 
Therefore, according to a theorem of 
N. Katz (Thm. II.1.12 \cite{del}), there exists a lattice for the meromorphic bundle 
$$
  \N = \O_{\P}(*P) \oplus \ldots \oplus \O_{\P}(*P)
$$ 
at infinity with respect to which $D_{\L}$ is a logarithmic connection, 
i.e. its local form in any holomorphic trivialisation contains $1$-forms with at most first-order poles. 
Such a lattice at infinity can be obtained similarly to the case of the other singularities. 
For this purpose, let $\zeta=z^{-1}$ be a local coordinate at infinity. Recall that the 
first component $w=w_1$ of (\ref{extension}) is supposed to be a section of $\O_{\P}$. Then, a logarithmic 
lattice $(\tilde{w}_1,\ldots,\tilde{w}_m)$ at infinity can be obtained by the formulae 
\begin{align}
    \tilde{w}_1 & = w \notag \\
    \tilde{w}_2 & = \zeta \frac{\d w}{\d \zeta} \notag \\
        & \vdots \label{extinf} \\
    \tilde{w}_m & = \zeta^{m-1} \frac{\d^{m-1} w}{\d \zeta^{m-1}}, \notag 
\end{align}
and the form of the connection is then again a modified companion matrix with entries in the 
last column equal to the coefficients of the equation multiplied by
appropriate powers of $\zeta$ which make the connection logarithmic near infinity. 
Now one can check by induction that as a differential operator 
$$
    \zeta^{j} \frac{\d^{j}}{\d \zeta^{j}} = (-z)^{j} \frac{\d^{j}}{\d z^{j}} + 
       \sum_{\kappa=0}^{j-1}a_{\kappa j} (-z)^{\kappa} \frac{\d^{\kappa}}{\d z^{\kappa}}
$$
for some integers $a_{\kappa j}$, $0<\kappa<j<m-1$. Set then $a_{j j}=1$ and $a_{\kappa j}=0$ for $\kappa>j$. 
We see moreover that for large $z$ one has 
$$
    \psi(z) \approx z^n, 
$$
hence the trivialisations (\ref{extension}) and (\ref{extinf}) are linked on $\C^*$ by the 
matrix 
$$
    B \diag(1,-z^{n-1},\ldots,(-1)^{m-1}z^{(m-1)(n-1)}) A,
$$
with $A=(a_{\kappa j})_{\kappa,j=1}^m$ and $B$ a holomorphic matrix on $\P\setminus \{0\}$, 
invertible at $\infty$. 
It follows that we defined a logarithmic extension of the connection $D_{\L}$ on the holomorphic bundle 
\begin{equation}\label{holbdl}
    E_{\L} = \O_{\P} \oplus \O_{\P}(\infty - P^o) \oplus \ldots \oplus \O_{\P}((m-1)(\infty - P^o))
\end{equation}
over $\P$. 
Hence, (\ref{extension}) induces a bijective correspondence between local solutions of (\ref{equation}) 
and local parallel sections of the logarithmic connection $D_{\L}$ on the holomorphic bundle $E_{\L}$. 
It is not difficult to see that had we started out with taking $w$ to be a section of a non-trivial 
line bundle $L$, we would have obtained a logarithmic connection on $E_{\L}\otimes L$. 
The following fundamental result is part of folklore; however, we give a proof
in the Appendix for lack of appropriate reference. 
\begin{prop}\label{prop:gaugeequiv}
If the logarithmic connections induced by the Fuchsian equations $\L_1,\L_2$ are gauge-equivalent, 
then $\L_1=\L_2$. 
\end{prop}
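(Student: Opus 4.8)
The plan is to combine the rigidity of the companion form with the geometry of the underlying bundle. Write $E := E_{\L_1} = E_{\L_2}$: by (\ref{holbdl}) the bundle $E_{\L}$ depends only on $m$, on the point $\infty$ and on the divisor $P^o$, not on the coefficients $G_k$ of the equation, so a gauge equivalence between the two logarithmic connections is a \emph{global} holomorphic automorphism $g$ of $E$ satisfying $g\,D_{\L_1}\,g^{-1} = D_{\L_2}$. Since $n\ge 2$, the line bundle $\O_\P(\infty - P^o)$ has negative degree $1-n$, so the summands $\O_\P(j(\infty - P^o))$ of $E$ have strictly decreasing degrees as $j$ grows; in particular there is no nonzero $\O_\P$-linear morphism $\O_\P \to \O_\P(j(\infty - P^o))$ for $j\ge 1$, because $H^0(\P,\O_\P(j(\infty - P^o))) = 0$. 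Hence $g$ maps the first summand $\O_\P\subseteq E$ into itself, and, being invertible, acts on it as multiplication by a nonzero constant $\lambda$. Replacing $g$ by $\lambda^{-1}g$, which changes neither the hypothesis nor the conclusion, we may assume $g$ restricts to the identity on $\O_\P\subseteq E$.

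Next I would work on the affine chart $\A$ in the trivialisation $(\epsilon_1,\dots,\epsilon_m)$ of $E|_\A$ underlying (\ref{extension}), in which $D_{\L_i} = \d^{1,0} - (A_i/\psi)\,\d z$ with $A_i$ the modified companion matrix (\ref{a}) of $\L_i$, and in which $\epsilon_1$ is a local generator of the sub-line-bundle $\O_\P$. The gauge relation then reads $\nabla_{\L_2}(g s) = g\,\nabla_{\L_1}(s)$ for all local sections $s$. The key point is that all the coefficients $G^{(i)}_k$ sit in the last column of $A_i$, so that $A_1\epsilon_k = A_2\epsilon_k = (k-1)\psi'\,\epsilon_k + \epsilon_{k+1}$ for $1\le k\le m-1$ (the subdiagonal entries of (\ref{a}) being all equal to $1$). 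Hence $\nabla_{\L_1}\epsilon_k$ and $\nabla_{\L_2}\epsilon_k$ are given by the \emph{same} formula $-\psi^{-1}\bigl((k-1)\psi'\,\epsilon_k + \epsilon_{k+1}\bigr)\,\d z$ for $1\le k\le m-1$. Plugging $s=\epsilon_k$ into the gauge relation and using $\O_\P$-linearity of $g$ together with the inductive hypothesis $g\epsilon_k = \epsilon_k$, every term cancels except the one carrying $\epsilon_{k+1}$, which yields $g\epsilon_{k+1} = \epsilon_{k+1}$. Since $g\epsilon_1 = \epsilon_1$ by the normalisation of the first paragraph, induction on $k$ gives $g\epsilon_k = \epsilon_k$ for $k=1,\dots,m$; thus $g$ is the identity on the dense open $\A$, hence $g = \mathrm{id}_E$. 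Consequently $D_{\L_1} = D_{\L_2}$, so $A_1 = A_2$; comparing the last columns of these matrices gives $G^{(1)}_k = G^{(2)}_k$ for every $k$, i.e. $\L_1 = \L_2$.

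I expect the only delicate point to be the first paragraph: the reduction to a scalar $g$ rests on the fact — and this is the sole place the hypothesis $n\ge 2$ is used — that $\O_\P$ is a canonical, automorphism-invariant sub-line-bundle of $E$, which kills the whole positive-dimensional ``unipotent'' part of $\mathrm{Aut}(E)$ (whose entries are the nonconstant sections of the $\O_\P(j(\infty - P^o))$). Everything after that is the cyclic-vector recursion and is a routine computation. Two bookkeeping checks should be carried out along the way: that the trivialisation realising the modified companion form (\ref{conn})--(\ref{a}) is literally the same for $\L_1$ and $\L_2$, and that in it $\epsilon_1$ generates $\O_\P\subseteq E$ — both are immediate from (\ref{extension}) and the definition (\ref{holbdl}) of $E_{\L}$.
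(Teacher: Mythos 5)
Your proof is correct, and its core step takes a genuinely different route from the paper's. The paper's appendix first shows, by the same degree count applied to \emph{all} pairs of summands of (\ref{holbdl}), that $g$ is triangular with constant diagonal entries; it then multiplies out $g^{-1}A_1g+\psi g^{-1}\partial g$, matches the entries on and above the diagonal with those of $A_2$, and kills the sub-diagonals of $g$ one at a time by induction (the general case is only sketched there). You instead extract from the degree argument only the single fact that the distinguished sub-line-bundle $\O_{\P}\subseteq E$ is preserved, normalise $g$ on it, and then run a cyclic-vector recursion: because the coefficients $G_k$ occupy only the last column of (\ref{a}), the intertwining relation applied to $\epsilon_k$ propagates $g\epsilon_k=\epsilon_k$ to $g\epsilon_{k+1}=\epsilon_{k+1}$, forcing $g=\mathrm{id}$ and hence $A_1=A_2$. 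This is cleaner, yields a complete (non-sketched) argument for all $m$, and identifies the real mechanism: $\epsilon_1$ is a cyclic vector whose orbit under the $G$-independent part of the connection frames $E$. One bookkeeping caveat, which you flag but should make explicit: the paper is inconsistent about whether (\ref{a}) acts on column or row vectors --- its appendix actually works with the transpose of (\ref{a}) (ones on the superdiagonal, $G$'s in the last row), which is the form for which parallel \emph{column} vectors correspond to solutions. Your argument reads (\ref{conn})--(\ref{a}) literally as acting on columns and is internally consistent with that choice (the first summand is then the cyclic generator and $g$ is upper triangular); under the transposed convention one runs the dual recursion on the rows of $g$, and since gauge equivalence is preserved under transposition the proposition follows either way, so this is not a gap.
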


Consider now the residue $\res(p,D_{\L})$ of $D_{\L}$ on $E_{\L}$ at each of the singular points $p \in P$; 
it is a well-defined endomorphism of the fiber $(E_{\L})_{p_j}$ of $E_{\L}$ at $p$, defined by applying 
$D_{\L}$ to the vector field $(z-p_j)\frac{\partial}{\partial z}$ and restricting to $(E_{\L})_{p_j}$ for 
$0<j\leq n$ (respectively, $-z^{-1}\frac{\partial}{\partial z}$ and restricting to $(E_{\L})_{p_0}$). 
Denote by $\{\mu^j_1,\ldots,\mu^j_m\}$ the set of generalized eigenvalues of the residue at $p_j$; 
this is also the set of exponents of the differential equation $\L$ at $p_j$ \cite{Sz}. 

\begin{rk}
Notice in particular that $\deg(E_{\L})=(1-n)m(m-1)/2$, in accordance with the classical Fuchs' relation, 
which states that the sum $\sum_{j,k}\mu^j_k$ of the eigenvalues of the residues of $D_{\L}$ in all 
singularities (including infinity) is equal to $(n-1)m(m-1)/2$, and with the residue theorem 
which states that the sum of the eigenvalues of the residues must be minus the degree of $E_{\L}$. 
\end{rk}

Throughout the paper, we will assume the genericity conditions : 
\begin{cond}\label{cond}
\begin{enumerate}
\item The eigenvalues $\mu^j_1,\ldots,\mu^j_m$ of the residue of the integrable connection 
$D_{\L}$ at each singularity $p_j$ do not differ by integers. 
(In particular, they are distinct.) We call this the \emph{non-resonance} condition. \label{nonres}
\item For any $k\in\{1,\ldots,m\}$ and any indices $\alpha^j_l$ for 
$j\in\{0,\ldots ,n\}$ and $l\in\{1,\ldots,k\}$ the sum $\sum_{j=0}^n\sum_{l=1}^k\mu^j_{\alpha^j_l}$ 
is not an integer. \label{nofinsum}
\end{enumerate}
\end{cond}

Condition \ref{nonres} implies that all the residues $\res(p_j,D_{\L})$ are regular semi-simple 
endomorphisms of the corresponding fibre $(E_{\L})_{p_j}$. 
Condition \ref{nofinsum} says that there can be no proper $D$-invariant subbundles of $E$, because 
by the residue theorem the negative of the degree of such a subbundle would be a sum as in the condition. 
In particular, it follows that any logarithmic connection with these eigenvalues of its residue is stable. 

Let us choose an arbitrary ordering 
of the eigenvalues $\mu^j_1,\ldots,\mu^j_m$, and denote the corresponding eigenvectors of 
$\res(p_j,D_{\L})$ by $v^j_1,\ldots,v^j_m\in(E_{\L})_{p_j}$. 
For $0\leq k\leq m $ let $E^j_k$ be the subspace of $(E_{\L})_{p_j}$ spanned by the vectors 
$\{v^j_{k+1},\ldots,v^j_m\}$. 
These subspaces define a full flag in $(E_{\L})_{p_j}$: 
$$
	(E_{\L})_{p_j} = E^j_0	\supset E^j_1 \supset \cdots \supset E^j_{m-1} \supset E^j_m = \{ 0\}. 
$$
In different terms, we get a quasi-parabolic strucutre, which is clearly compatible with 
$\res(p_j,D_{\L})$ in the sense that the residue maps each term of the filtration into itself. 

Furthermore, for any $j\in\{0,\ldots,n\}$ let us choose real numbers $\beta^j_1,\ldots,\beta^j_m$ 
such that $\beta^j_k$ and $\beta^j_l$ do not differ by an integer for any $j,k,l$ 
(in particular, they are distinct). 
We will call such a choice $\beta^j_1,\ldots,\beta^j_m$ \emph{generic parabolic weights}. 

%\begin{rk}
%Clearly, condition (\ref{parwttwo}) defines an affine hyperspace $H$ in the space $\R^{m(n+1)}$ 
%of all possible $m$-uplets $\beta^j_1,\ldots,\beta^j_m$ at all $p_j$, and conditions (\ref{parwtone}) 
%and (\ref{parwtthree}) clearly hold generically on $H$. 
%Condition (\ref{parwttwo}) implies that any logarithmic integrable connection $(E,D)$ with these residues 
%is parabolically stable for the set of weights $\beta^j_1,\ldots,\beta^j_m$ in the usual sense: 
%any $D$-invariant holomorphic subbundle of $E$ has slope smaller than $E$. 
%Indeed, by the residue theorem for such a logarithmic connection there may exist no non-trivial 
%$D$-invariant subbundles at all. 
%Stability will be needed for the definition of the moduli space $\Mod$ of integrable 
%connections containing $D_{\L}$ (see Remark \ref{rk:moduli}). %and to be able to apply non-Abelian Hodge theory in Section \ref{sec:Hitchinmap}. 
%\end{rk}

We are interested in the following two numbers: 
\begin{enumerate}
\item the dimension $e$ of the space $\E$ of deformations of the polynomials in (\ref{equation}) 
such that all residues of the associated integrable connection $D_{\L}$ remain in the same conjugacy class 
\label{def1}
\item the dimension $c$ of the moduli space $\Mod$ of S-equivalence classes of 
$\beta$-stable integrable connections $(E,D)$ logarithmic in $P$ over a vector bundle $E$ of degree 
$d=(1-n)m(m-1)/2$, with fixed conjugacy classes of all its residues. \label{def2} 
\end{enumerate}
\begin{rk}\label{rk:moduli}
There are various sources for the definition of the moduli space $\Mod$. 
In any case, it is a quasi-projective scheme which is smooth for generic choice of 
eigenvalues $\mu$. 
An analytic construction is carried out (in greater generality than what we need 
here, namely in the case with irregular singularities) in Sections 6-8 of \cite{biqboa}. 
In the $\Sl_2$-case an algebraic construction of $\Mod$ in the relative setting 
is given in \cite{iis1}, which is generalized in \cite{ina} to the higher-rank case. 
%Alternatively, one could define it geometrically as a symplectic leaf of the coarse moduli 
%scheme of stable logarithmic connections (with arbitrary residues) constructed in Theorem 3.5 of \cite{nit}. 
On the other hand, the space $\E$ is well-known to be an affine space, see e.g. \cite{ince}. 
\end{rk}
It is immediate that $e\leq c$, for the space of Fuchsian deformations \ref{def1} is contained in the 
space of integrable connections \ref{def2} having the right monodromy, and by 
Proposition \ref{prop:gaugeequiv} this inclusion map is injective. 
In short, we will call deformations leaving invariant the conjugacy classes of all the residues 
\emph{locally isomonodromic}.
%Using Fuchs' condition, the number $e$ was computed by Forsyth in \cite{for}, pp. 127-128. 
In the introduction of his book \cite{ka}, N. Katz computed $c$, and noticed the equality 
\begin{equation}\label{equality}
    c=2e. 
\end{equation}
In fact, both sides of this equation turn out to be 
\begin{equation}\label{exactvalue}
   2 - 2m^2 + {m(m-1)}(n+1). 
\end{equation}
He also asked whether a geometric reason underlies this equality, more precisely, whether a weight $1$ 
Hodge structure can be found on the tangent to the moduli space of integrable connections, whose $(1,0)$-part  
would give precisely the locally isomonodromic deformations of Fuchsian equations. 

We will define such a Hodge structure in Section \ref{sec:Hodgestr}: 
\begin{thm}\label{thm:main} 
Let $D_{\L}$ be an integrable connection (\ref{conn})-(\ref{a}) induced by a Fuchsian equation (\ref{equation}) 
satisfying Condition \ref{cond}. Then there exists a natural weight $1$ Hodge structure on the tangent 
at $D_{\L}$ to the moduli space $\Mod$ of integrable connections
$$
     T_{D_{\L}}\Mod = H^{1,0} \oplus H^{0,1} 
$$
with the property that its part of type $(1,0)$ is the tangent of the space $\E$ of locally isomonodromic 
deformations of the Fuchsian equation: 
$$
    T_{D_{\L}}\E = H^{1,0}.
$$ 
%In particular, the dimension of this latter space is equal to $\dim(\Mod)/2$. 
\end{thm}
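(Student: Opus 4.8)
The plan is to work throughout with Čech or Dolbeault realizations of the tangent space $T_{D_{\L}}\Mod$, which by standard deformation theory is the first hypercohomology $\mathbb{H}^1$ of the complex $[\End(E_{\L}) \xrightarrow{D_{\L}} \End(E_{\L})\otimes\Omega^1_{\P}(\log P)]$ of sheaves preserving the quasi-parabolic structure and the residue conjugacy classes at each $p_j$ (i.e. the parabolic endomorphisms that are nilpotent with respect to the flags $E^j_\bullet$ on the associated graded). The first step is to set up this identification carefully, using Condition \ref{cond} to guarantee smoothness of $\Mod$ at $D_{\L}$ and hence that this $\mathbb{H}^1$ really is the tangent space, and to record the dual description via Serre duality, so that the Hodge structure I construct will automatically be polarizable. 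I would then exhibit the subspace $T_{D_{\L}}\E$ concretely: differentiating the modified companion matrix \eqref{a} with respect to the coefficients $G_k$ (subject to the Fuchs degree bounds $\deg G_k \le k(n-1)$ and to the locally-isomonodromic constraint that each residue stays in its conjugacy class) produces an explicit subspace of infinitesimal deformations, all of which are supported — as Čech cocycles on the two-chart cover $\{\A, \P\setminus\{p_0\}\}$ — by lower-triangular-type matrices dictated by the companion form.

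The heart of the argument is to define the candidate $H^{0,1}$ and prove the direct sum decomposition $T_{D_{\L}}\Mod = H^{1,0}\oplus H^{0,1}$ with $H^{1,0}=T_{D_{\L}}\E$. The natural guess, matching Katz's numerology $c = 2e$, is that $H^{0,1}$ should be a ``Hecke-modified'' or ``twisted-dual'' copy of $H^{1,0}$: concretely, conjugating the companion-type cocycles by the transition matrix appearing in the excerpt, $B\,\diag(1,-z^{n-1},\ldots,(-1)^{m-1}z^{(m-1)(n-1)})\,A$, or passing to the ``transpose companion'' picture, should send the $(1,0)$-cocycles to a complementary family of $(0,1)$-cocycles. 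I would first check on dimensions — both pieces have dimension $e = 1 - m^2 + \tfrac12 m(m-1)(n+1)$ by Remark \ref{rk:moduli} and formula \eqref{exactvalue} — and then prove that the sum is direct by showing the intersection in $\mathbb{H}^1$ is zero: a class lying in both would be represented simultaneously by an upper- and a lower-triangular companion cocycle, forcing it (after using the coboundary relation and the explicit form of the bundle \eqref{holbdl}) to be a global parabolic endomorphism, which vanishes by the stability consequence of Condition \ref{cond}\eqref{nofinsum}. Finally I verify that the resulting decomposition is a genuine Hodge structure: it is defined over $\R$ (the weights $\beta^j_k$ and eigenvalues are real, and complex conjugation of the Dolbeault representatives swaps the two summands), it has weight $1$, and it is polarized by the natural symplectic form on $T_{D_{\L}}\Mod$ coming from Serre duality, under which $H^{1,0}$ is isotropic — this last point is essentially the statement that $\E$ is a Lagrangian (more precisely half-dimensional coisotropic-complementary) subvariety, which is itself a known feature of isomonodromic deformations.

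The main obstacle I anticipate is proving that $T_{D_{\L}}\E$ is exactly half-dimensional inside $\mathbb{H}^1$ and that the complementary family genuinely lands in $\mathbb{H}^1$ rather than being a coboundary — i.e. showing the candidate $(0,1)$-cocycles are nontrivial and independent in hypercohomology. This requires a careful hypercohomology computation on $\P$: unwinding the Čech double complex for the two-chart cover, identifying the relevant $H^0$ and $H^1$ groups of the sheaves $\End(E_{\L})(-P)$ and $\End(E_{\L})\otimes\Omega^1(\log P)$ in terms of spaces of polynomials with the degree bounds coming from \eqref{holbdl}, and matching these dimensions with Fuchs' bound $\deg G_k\le k(n-1)$ for the $(1,0)$-side and with its ``dual'' bound for the $(0,1)$-side. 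I expect the bookkeeping of parabolic weights (keeping track of which endomorphisms preserve the flags $E^j_\bullet$ and are residue-nilpotent) to interact delicately with these degree counts, and Condition \ref{cond}\eqref{nofinsum} to be exactly what is needed to kill all the error terms; once the dimension count closes, the Hodge-structure axioms follow formally.
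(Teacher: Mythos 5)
Your identification of the tangent space with the hypercohomology of the parabolic/isomonodromic endomorphism complex matches the paper, but the core of your argument --- constructing $H^{0,1}$ as a ``Hecke-modified'' or ``transpose-companion'' copy of the equation deformations, glued by the transition matrix --- is not a well-defined construction, and it is not what makes the theorem work. The paper's $(0,1)$-part is canonical: the hypercohomology long exact sequence of the two-term complex gives $0\to C\to\H^1(\mathcal{C})\to K\to 0$ with $C=\coker H^0(\ad_D)$ and $K=\ker H^1(\ad_D)$, where $K$ is identified (via Mehta--Seshadri) with the tangent to the moduli of parabolic bundles under the forgetful map, and $C$ with deformations of the connection on a \emph{fixed} parabolic bundle. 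The splitting of this sequence and the anti-linear involution $\varsigma$ exchanging $C$ and $K$ come from the harmonic metric $h$ attached to the stable connection (Biquard/Simpson): a class in $C$ is represented by a harmonic $(1,0)$-form $\alpha\,\d z$, and $\varsigma$ sends it to $\alpha^*\,\d \bar{z}$, the adjoint taken with respect to $h$. Your claim that ``complex conjugation of the Dolbeault representatives swaps the two summands'' cannot be made to work without this metric: naive conjugation of a representative of a class of the holomorphic complex $\mathcal{C}$ does not produce another cocycle for $\mathcal{C}$, and there is no real structure on $\H^1(\mathcal{C})$ until one has singled out $L^2$ harmonic representatives. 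This is the missing idea, and your proposed triangularity argument for directness of the sum does not substitute for it.

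The second gap is in the identification $T_{D_{\L}}\E=H^{1,0}$. You propose a \v{C}ech dimension count with the Fuchs degree bounds, which you yourself defer as ``delicate bookkeeping''; but a dimension count alone would at best show the two spaces have equal dimension, not that one sits inside the correct summand. The paper's argument is that the underlying \emph{parabolic} bundle $E_{\L}$ (the lattice (\ref{holbdl}) together with the flags of residue eigenvectors) is literally independent of the polynomials $G_k$ with prescribed exponents, so the whole family $\E$ maps to a single point under the forgetful map $\Mod^0\to\mathcal{P}_d(\{\beta^j_k\})$; hence $T_{D_{\L}}\E\subset C$, and equality then follows from Katz's count $c=2e$ together with $\dim C=\dim K=c/2$ (Serre duality, Lemma \ref{lem:duality}). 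Without this observation, or an equivalent one, your argument does not pin $T_{D_{\L}}\E$ to the $(1,0)$-summand.
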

The Hodge structure comes from a hypercohomology long exact sequence, and is well-defined on 
the tangent space at all elements of the moduli space. 
Notice that a similar exact sequence already appeared several times in the literature: 
for example, in Proposition 4.1 of \cite{nit} as well as in Proposition 6.1 of \cite{iis1}. 
In addition, in the case of rank $2$ Higgs bundles over a projective curve a similar 
splitting is used in Section 7 of \cite{hit} to obtain topological information on the moduli space. 

Notice also that our result is completely algebraic and does not involve the choice of parabolic weights; 
it would be interesting to study the dependence of these results on the weight chambers. 
In the particular case of rank $2$ bundles with $4$ parabolic points this is carried out in \cite{lss}. 
%The outcome of the computations of that paper confirms our result in this special case. 

%It turns out that the Hodge structure also has a geometric interpretation away from the image of $\E$; 
%namely, it is the subspace of infinitesimal directions leaving invariant the 
%position of apparent singularities of an associated scalar differential equation --- 
%we plan to come back to this issue in a forthcoming paper \cite{ssz}. 

In Subsection \ref{sec:Applications}, we draw the following consequence of our Theorem: 
\begin{cor}\label{thm:Hitchinmap}
$\E$ is an algebraic subvariety which is Lagrangian with respect to the 
natural holomorphic symplectic structure of the de Rham moduli space. 
\end{cor}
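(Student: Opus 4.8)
The plan is to derive the corollary directly from Theorem \ref{thm:main}: the assertion is that $\E$ is a subvariety of $\Mod$, that $\dim_{\C}\E=\tfrac12\dim_{\C}\Mod$, and that the holomorphic symplectic form $\omega$ of the de Rham moduli space restricts to zero on $\E$ — and a half\/-dimensional isotropic subvariety is Lagrangian. Algebraicity is quick: $\E$ is an affine space (Remark \ref{rk:moduli}), $\L\mapsto D_{\L}$ is an algebraic morphism $\E\to\Mod$ which is injective by Proposition \ref{prop:gaugeequiv} and whose differential at every point is, by Theorem \ref{thm:main}, the injection $H^{1,0}\hookrightarrow T_{D_{\L}}\Mod$; hence it is an immersion and (one checks) its image is a locally closed algebraic subvariety, which I still denote $\E$. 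The dimension count is just as quick: a weight $1$ Hodge structure satisfies $H^{1,0}=\overline{H^{0,1}}$, so $\dim_{\C}H^{1,0}=\dim_{\C}H^{0,1}$ and therefore, by Theorem \ref{thm:main},
$$
   \dim_{\C}\E=\dim_{\C}H^{1,0}=\tfrac12\dim_{\C}\bigl(H^{1,0}\oplus H^{0,1}\bigr)=\tfrac12\dim_{\C}T_{D_{\L}}\Mod=\tfrac12\dim_{\C}\Mod
$$
(using smoothness of $\Mod$ at $D_{\L}$, Remark \ref{rk:moduli}); this is just Katz's equality $c=2e$ of (\ref{equality}).

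The core of the argument is the isotropy $\omega|_{T_{D_{\L}}\E}=0$. The natural holomorphic symplectic form $\omega$ on the de Rham moduli space is the pairing on $T_{D_{\L}}\Mod=\mathbb{H}^{1}(C^{\bullet})$ obtained, via cup product, from the trace pairing (with a residue at each point of $P$) which identifies the two\/-term deformation complex $C^{\bullet}$ — parabolic endomorphisms of $E_{\L}$ in degree $0$, strongly parabolic endomorphisms twisted by $\Om^{1}_{\P}(\log P)$ in degree $1$, with differential $D_{\L}$ — with its own Serre dual up to a shift; this is standard (see \cite{nit}, \cite{iis1}, \cite{biqboa}). On the other hand, the Hodge decomposition of Theorem \ref{thm:main} comes from a short exact sequence of complexes
$$
   0\lra {C'}^{\bullet}\overset{\iota}{\lra}C^{\bullet}\overset{\pi}{\lra}{C''}^{\bullet}\lra 0,
$$
where ${C'}^{\bullet}$ is assembled from the endomorphisms compatible with the filtration of $E_{\L}$ produced by the extension (\ref{extension}) — the ``Fuchs filtration'' — with $H^{1,0}$ the image of $\mathbb{H}^{1}({C'}^{\bullet})$ in $\mathbb{H}^{1}(C^{\bullet})$ and $H^{0,1}\cong\mathbb{H}^{1}({C''}^{\bullet})$. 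What I want is that $\omega$ pairs $H^{1,0}$ with $H^{0,1}$ and vanishes on each of them — in other words, that $\omega$ polarizes the weight $1$ Hodge structure of Theorem \ref{thm:main}, the desired equality $\omega(H^{1,0},H^{1,0})=0$ being the analogue of the first Hodge--Riemann bilinear relation. I would reduce this to the compatibility that, under the self\/-duality isomorphism $C^{\bullet}\cong(C^{\bullet})^{\vee}[\mathrm{sh}]$, the Serre dual of $\iota$ is carried to $\pi$ — equivalently, that ${C'}^{\bullet}$ is an isotropic subcomplex for the trace pairing and ${C''}^{\bullet}$ is its annihilator. Granting this, naturality of Serre duality yields, for $a,b\in H^{1,0}$ lifted to $a',b'\in\mathbb{H}^{1}({C'}^{\bullet})$,
$$
   \omega(a,b)=\langle\iota_{*}a',\iota_{*}b'\rangle=\langle a',(\pi\circ\iota)_{*}b'\rangle=0
$$
since $\pi\circ\iota=0$; hence $\omega$ vanishes on $H^{1,0}=T_{D_{\L}}\E$, and with the dimension count $\E$ is Lagrangian. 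This also fits the heuristic that $\E$, being made of connections on the \emph{fixed} bundle $E_{\L}$, ought to sit inside a cotangent fibre of $\Mod$ over the moduli of parabolic bundles, such fibres being Lagrangian.

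I expect the genuinely delicate step to be this self\/-duality compatibility, that is, showing ${C'}^{\bullet}$ is isotropic for the trace pairing defining $\omega$, or equivalently that ${C''}^{\bullet}$ is the Serre dual of ${C'}^{\bullet}$ up to shift. This is where the explicit shape of the modified companion connection (\ref{conn})--(\ref{a}) and of the Fuchs filtration must enter: one has to verify that the trace pairing annihilates ${C'}^{\bullet}\otimes{C'}^{\bullet}$ already at the level of complexes — so that the induced map on $\mathbb{H}^{2}$ is zero — and that the resulting annihilator exhausts ${C''}^{\bullet}$, with Condition \ref{cond} used to rule out extraneous cohomology. Everything else in the argument is formal; in particular it never uses the explicit value (\ref{exactvalue}) of $c$ and $e$ beyond what Theorem \ref{thm:main} already provides.
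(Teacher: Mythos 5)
Your proposal is correct in outline and reaches the same conclusion, but the route you take for the key isotropy step is genuinely different from --- and considerably heavier than --- the paper's. The paper identifies $T_{D_{\L}}\Mod$ with endomorphism-valued $L^2$ harmonic $1$-forms, writes $\Omega_I(\phi_1,\phi_2)=\int\tr(\phi_1\wedge\phi_2)$, and simply observes that this vanishes pointwise when both arguments are of type $(1,0)$, which by Theorem \ref{thm:main} is exactly the case for tangent vectors to $\E$; that is the entire isotropy argument, half-dimensionality being (\ref{equality}), and algebraicity being handled essentially as you do, via the algebraic canonical family and the universal property of $\Mod$. Your algebraic route through Serre duality of a subcomplex is viable, but you have misidentified the subcomplex: $H^{1,0}=C$ is the image in $\H^1(\mathcal{C})$ of $H^0(\Om^1(P)\otimes\End_{\iso}(E))$, i.e.\ of the hypercohomology of the degree-one piece of the stupid filtration of (\ref{complex}) --- infinitesimal deformations of $D$ on the \emph{fixed} parabolic bundle $E_{\L}$, which is precisely what Subsection \ref{subsec:char} establishes ($E_{\L}$ does not move as $\L$ varies) --- and not of a complex built from endomorphisms preserving a filtration of $E_{\L}$ coming from (\ref{extension}). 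With the correct subcomplex your ``delicate step'' evaporates: the cup-product pairing restricted to the degree-one term $\Om^1(P)\otimes\End_{\iso}(E)$ against itself lands in a sheaf of $2$-forms on a curve, hence vanishes identically, so the subcomplex is tautologically isotropic, and Lemma \ref{lem:duality} is exactly the statement that Serre duality exchanges it with the quotient complex. Both approaches therefore work; the paper's buys a one-line proof from the harmonic-representative description, while yours makes transparent the heuristic you state at the end, namely that $\E$ sits inside a fibre of the forgetful map to parabolic bundles and such fibres are Lagrangian.
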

We are aware that J. Aidan \cite{aidan} has results in the same direction. 

There are quite a few possible directions to generalise our results. 
First, one could study the same questions on moduli spaces of
integrable connections on higher-genus curves with marked points --- 
we plan to treat this question in a forthcoming paper \cite{ssz}. 
Similar structures for an arbitrary reductive structure group 
on higher-genus curves (but without logarithmic points) called opers were 
defined by A. Beilinson and V. Drinfeld in \cite{bei-drin}. 
This could possibly lead to a generalisation for other structure groups too. 
Another possible generalisation would be to allow for arbitrary residues. 

Notice finally that a Dolbeault analog of our embedding of the space of 
Fuchsian equations to the moduli space appeared in the article of N. Hitchin 
\cite{hit-teich} under the name Teichm\"uller component. 

\paragraph{\bf{Acknowledgments}} During the preparation of this text the author 
benefited of discussions with D. Bertrand, O. Biquard, P. Boalch, T. Hausel, 
N. Hitchin, M-H. Saito, C. Simpson, A. Stipsicz and A. Szenes; 
we would like to thank for the insightful comments and motivating discussions. 
The paper was written with the support of Hungarian Science Foundation OTKA grant NK 81203 
and \emph{Lend\"ulet project}. 
%held a junior research fellowship at the 
%R\'enyi Institute of Mathematics, Budapest; he would like to express his thanks 
%for the grant provided. 

\section{The Hodge structure} \label{sec:Hodgestr}

\subsection{Construction}\label{subsec:construction} 

First, let us recall some basic facts about deformations of logarithmic connections on 
curves. Most of this material is proved in Section 12 of \cite{biq} for projective 
varieties with a smooth polar divisor. 
%A nice overview of the non-singular case and with respect to the Dolbeault complex structure can be found in \cite{tade}. 

%For every singular point $p\in P$, choose an arbitrary ordering of the set 
%$\{\mu^p_1,\ldots \mu^p_m\}$. Let $(E,D)$ be an arbitrary element of $\Mod$; 
%then the residue of $D$ in a suitable trivialisation of $E$ is the diagonal 
%matrix with eigenvalue $\mu^p_j$ in $j$-th position. This then induces a 
%quasi-parabolic structure on $E$: namely, a filtration 
%$$
%   E|_p=F_0\subset F_1\subset \cdots \subset F_m =0, 
%$$
%where $F_j$ is spanned by the eigenvectors corresponding to the eigenvalues 
%$\mu^p_{j+1},\ldots ,\mu^p_m$. 
Let $(E,D)$ be an arbitrary element of $\Mod$. 
Let us denote by $\End_{\para}(E)$ the sheaf of parabolic endomorphisms: by definition, 
a section $\varphi$ near $p_j\in P$ of the endomorphism-bundle of $E$ is parabolic if 
$\varphi(p_j)$ respects the parabolic structure, i.e. maps each $V^j_k$ into itself. 
Furthermore, let $\End_{\iso}(E)$ denote the sheaf of locally isomonodromic endomorphisms, 
which are by definition those parabolic endomorphisms $\varphi$ whose value $\varphi(p_j)$ at 
$p_j$ lies in the adjoint orbit of the residue of the connection in the Lie algebra $\mathfrak{gl}(m)$; 
in different terms, $\varphi(p_j)$ maps each $V^j_k$ into $V^j_{k+1}$. 
By Condition \ref{cond}, the residue at $p_j$ is regular diagonal in the trivialisation 
$v^j_1,\ldots,v^j_m$; the parabolic endomorphisms are the ones whose value at $p_j$ is a lower-triangular 
matrix in this basis, and the locally isomonodromic endomorphisms are the ones whose value at 
$p_j$ is a strictly lower-triangular matrix in this basis. 
Denote by $\H^i$ the $i$-th hypercohomology of a sheaf complex. 
The infinitesimal deformations of the integrable connection $(E,D)$ (without changing the eigenvalues of the 
residues) are then described by the first hypercohomology $\H^1(\mathcal{C})$ of the two-term complex 
\begin{equation}\label{complex}
    \End_{\para}(E) \xrightarrow{\ad_D} \Om^1(P) \otimes \End_{\iso}(E), \tag{$\mathcal{C}$}
\end{equation}
where $\ad_D$ denotes the action of $D$ on endomorphisms. 
Write $\nabla=D+\dbar^E$ for the differential geometric flat connection associated to the 
couple $(E,D)$. 
We denote by $\d_{\nabla}$ the exterior derivative induced by $\nabla$ on $E$-valued 
differential forms, and extend it to every bundle functorially associated to $E$. 
If $(E,D)$ is stable, then there exists a unique harmonic metric $h$ on $E$: this
is a Hermitian metric satisfying a second-order non-linear partial
differential equation, that we do not spell out here; for details see
e.g. \cite{biq}. 
Let us denote by $^*$ the operation of taking adjoint of an endomorphism of
$E$ (or formal adjoint of a differential operator on $E$). 
Set $\Delta=\d_{\nabla}\d_{\nabla}^*+\d_{\nabla}^*\d_{\nabla}$ for the associated Laplace operator 
on differential forms, and call sections annihilated by $\Delta$ harmonic forms. 
Let us endow the base $\P\setminus P$ with a metric which looks like the
Poincar\'e metric near the punctures. 
The elements of $\H^1(\mathcal{C})$ are then represented by
endomorphism-valued $L^2$ harmonic $1$-forms. 
%If $(E,D)$ is stable, then $\H^0(\mathcal{C})$ and $\H^2(\mathcal{C})$ are isomorphic to $\C$. 
Furthermore, by Proposition 6.1 of \cite{iis1}, cup product on hypercohomology induces a natural 
complex symplectic structure on $\Mod$. 
%Notice first that the hypercohomology spaces of $\mathcal{C}$ are canonically
%isomorphic to the hypercohomology spaces of the complex $\mathcal{C}(-P)$ given by 
%$$
%       \End(E)(-P) \xrightarrow{D} \Om^1 \otimes \End_{iso}(E). 
%$$
%Consider the complex 
%followed by the isomorphism $\H^2(\mathcal{C})\cong \C$ 

The hypercohomology long exact sequence for (\ref{complex}) reads 
\begin{align}\label{exseq}
   0 \to \H^0(\mathcal{C}) & \to H^0(\End_{\para}(E)) \xrightarrow{H^0(D)} H^0(\Om^1(P) \otimes \End_{iso}(E)) \to \notag \\
     \to \H^1(\mathcal{C}) & \to H^1(\End(E)_{\para}) \xrightarrow{H^1(D)} H^1(\Om^1(P) \otimes \End_{iso}(E)) \to \notag \\
     & \to \H^2(\mathcal{C}) \to 0. 
\end{align}
The maps $H^i(D)$ are induced by $D$ on the corresponding cohomology spaces. 
Setting 
\begin{align}
    C & = \coker(H^0(D)) \\
    K & = \ker(H^1(D)), 
\end{align}
there follows a short exact sequence for the space of infinitesimal deformations: 
\begin{equation}\label{shortexsq}
   0 \to C \to \H^1(\mathcal{C}) \to K \to 0. 
\end{equation}
Consider $\mathcal{P}_d(\{\beta^j_k\})$, the moduli space of $\beta$-stable parabolic bundles of 
degree $d=(1-n)m(m-1)/2$, defined in \cite{ms}. The moduli space $\Mod$ admits a Zariski open subset 
$\Mod^0$ where the underlying parabolic bundle $E$ is stable. 
Notice that in principle for some choices of parabolic weights $\{\beta^j_k\}$ it might happen 
that there are no stable underlying bundles, i.e. $\Mod^0$ may be empty. 
In the particular case of rank $2$ bundles with $4$ parabolic points, this question is analyzed 
in Section 4 of \cite{lss}. Namely, it is proved there that there exists a non-empty set of parabolic 
weights (given by some explicit inequalities) such that the generic quasi-parabolic structure with 
these weights is stable. It would be interesting to show a similar statement in the general case too; 
in the following proposition we tacitly assume this is the case. 
\begin{prop}
%There exist choices of parabolic weights $\{\beta^j_k\}$ with $j\in\{0,\ldots,n\},k\in\{1,\ldots,m\}$ 
%such that the subspace $\Mod^0$ is non-empty. For such choices of the weights 
The tangent of the forgetful map 
\begin{equation}\label{mp}
	\Mod^0\to\mathcal{P}_d(\{\beta^j_k\})
\end{equation}
mapping $(E,D)\in\Mod^0$ to $E$ is the map 
\begin{equation}\label{tmp}
	\H^1(\mathcal{C}) \to K
\end{equation}
of (\ref{shortexsq}). Equivalently, the term $C$ in (\ref{shortexsq}) corresponds to infinitesimal modifications 
of the logarithmic connection on a fixed parabolic bundle $E$. 
\end{prop}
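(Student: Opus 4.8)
The plan is to identify the map \eqref{tmp} explicitly with the derivative of the forgetful morphism \eqref{mp} by unwinding the hypercohomology interpretation of both sides. First I would recall that for a family of logarithmic connections $(E_t, D_t)$ parametrised by a small disk, differentiating at $t=0$ produces a \v Cech-style $1$-cocycle for the complex $(\mathcal{C})$: on overlaps $U_\alpha\cap U_\beta$ one records the transition-function derivative $\dot g_{\alpha\beta}\in\End_{\para}(E)$, and on each $U_\alpha$ one records the connection-matrix derivative $\dot D_\alpha\in\Om^1(P)\otimes\End_{\iso}(E)$, with the cocycle condition coming from the two families of gluing/compatibility relations. The class of this cocycle in $\H^1(\mathcal{C})$ is the Kodaira--Spencer class of the deformation, and this is the standard content behind the assertion (already used implicitly above) that $\H^1(\mathcal{C})$ is the tangent space to $\Mod$. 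Forgetting the connection and remembering only $E_t$ replaces this cocycle by just the collection $\{\dot g_{\alpha\beta}\}$, i.e. by its image under the natural projection $\H^1(\mathcal{C})\to H^1(\End_{\para}(E))$; and this projection, read off from \eqref{exseq}, is precisely the composite $\H^1(\mathcal{C})\twoheadrightarrow K\hookrightarrow H^1(\End_{\para}(E))$ because the connecting map $\H^1(\mathcal{C})\to H^1(\End_{\para}(E))$ factors through $K=\ker(H^1(D))$ by exactness. So the content to verify is: (a) the Kodaira--Spencer class of a deformation of $(E,D)$ is the cocycle just described; (b) the Kodaira--Spencer class of the underlying parabolic bundle $E_t$ in $H^1(\End_{\para}(E))$ — which by \cite{ms} is $T_E\mathcal{P}_d(\{\beta^j_k\})$ — is $\{\dot g_{\alpha\beta}\}$; and (c) the forgetful-map commutation, that passing from (a) to (b) is exactly applying $\H^1(\mathcal{C})\to H^1(\End_{\para}(E))$, whose image lands in $K$.

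Concretely I would organise the proof in three steps. Step one: set up a versal analytic family of logarithmic connections through $(E,D)$ over a polydisk, using a good open cover $\{U_\alpha\}$ trivialising $E$ and all its small deformations simultaneously, and with $\{p_j\}\subset U_{\alpha(j)}$ for a single index each so that the parabolic and isomonodromic conditions are local conditions on one chart; then write out the derivative at the base point as a hypercohomology $1$-cocycle of $(\mathcal{C})$, checking that the isomonodromy constraint on the residues forces $\dot D$ to take values in $\Om^1(P)\otimes\End_{\iso}(E)$ rather than merely $\Om^1(P)\otimes\End_{\para}(E)$ (this is exactly the linearisation of ``residue stays in a fixed adjoint orbit''), and that the quasi-parabolic constraint on $E_t$ forces $\dot g_{\alpha\beta}$ into $\End_{\para}(E)$. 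Step two: observe that the underlying parabolic bundle of this family is classified by the cocycle $\{\dot g_{\alpha\beta}\}\in H^1(\End_{\para}(E))$, and that by the Mehta--Seshadri-type deformation theory of \cite{ms} this group is canonically $T_E\mathcal{P}_d(\{\beta^j_k\})$ on the stable locus; hence the tangent of \eqref{mp} sends the hypercohomology class to $[\{\dot g_{\alpha\beta}\}]$. Step three: match this with the algebraic map in \eqref{exseq}: the natural ``forget the $\Om^1(P)\otimes\End_{\iso}$-component'' map $\H^1(\mathcal{C})\to H^1(\End_{\para}(E))$ of the complex $(\mathcal{C})$ is, by the very construction of the long exact sequence, the composition of the surjection $\H^1(\mathcal{C})\to K$ in \eqref{shortexsq} with the inclusion $K\hookrightarrow H^1(\End_{\para}(E))$; so on the stable locus the tangent of the forgetful map \eqref{mp} is exactly \eqref{tmp}. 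Finally, for the ``equivalently'' clause: $C=\coker(H^0(D))$ by exactness is the kernel of $\H^1(\mathcal{C})\to K$, hence consists exactly of those first-order deformations that die under forgetting the connection, i.e. that fix the parabolic bundle $E$; a class in $C$ is represented by a global section $\eta\in H^0(\Om^1(P)\otimes\End_{\iso}(E))$ modulo $\ad_D$, which one reads as adding $\eta$ to the connection matrix $D$ while keeping $E$ and its gluing data unchanged — precisely an infinitesimal modification of the logarithmic connection on the fixed parabolic bundle.

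I expect the main obstacle to be Step one done carefully: pinning down the precise local shape of the deformation cocycle so that the three sheaves $\End_{\para}(E)$, $\End_{\iso}(E)$ and $\Om^1(P)$ appear with the right twists, and in particular justifying rigorously that the isomonodromy (fixed-adjoint-orbit) condition on residues linearises to ``$\dot D$ lies in $\Om^1(P)\otimes\End_{\iso}(E)$.'' In the diagonal trivialisation of $\res(p_j,D)$ guaranteed by Condition \ref{cond}, this amounts to the elementary linear-algebra fact that the tangent space at a regular semisimple element $\Lambda$ to its $GL(m)$-orbit is $\{[\Lambda,X]:X\in\mathfrak{gl}(m)\}=$ the off-diagonal matrices, intersected appropriately with the lower-triangular (parabolic) matrices to give the strictly lower-triangular ones — but translating this fibrewise statement into a statement about the sheaf-level cocycle, compatibly with the gluing, is where the bookkeeping lives. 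Everything else — the factorisation through $K$, the identification of $C$, the reference to \cite{ms} for $T_E\mathcal{P}_d$ — is formal once Step one is set up, so I would allot the bulk of the write-up to making the versal family and its Kodaira--Spencer cocycle precise, and keep the remaining steps short.
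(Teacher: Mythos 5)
Your proposal is correct and follows essentially the same route as the paper: identify a tangent vector to $\Mod$ with a pair (deformation of the bundle, deformation of the connection), observe that the forgetful map drops the connection component so its derivative is the natural projection $\H^1(\mathcal{C})\to H^1(\End_{\para}(E))$, which by exactness of (\ref{exseq}) lands in $K$, and invoke Theorem 5.2 of \cite{ms} to identify the target with the tangent space of $\mathcal{P}_d(\{\beta^j_k\})$. The only (inessential) difference is presentational: you work with \v{C}ech cocycles $(\{\dot g_{\alpha\beta}\},\{\dot D_\alpha\})$ and spend more effort on linearising the fixed-adjoint-orbit condition, whereas the paper writes the deformation in Dolbeault form as $(\bar{\partial}^E+\varepsilon\beta,\, D+\varepsilon\alpha)$ and sends $(\alpha,\beta)\mapsto\beta$ directly.
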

\begin{proof}
%For the first statement we pick an arbitrary $(E,D)\in\Mod$ and show that it is possible to 
%choose the weights so that $E$ become parabolically stable. We use induction on the rank of $E$. 
%If $E$ is stable (in particular, if $\rank(E)=1$) then we are done. 
%Otherwise, let $F$ be a minimal parabolic destabilising subbundle, i.e. one which does not contain 
%any other strict destabilising subbundles. Let us denote by $l$ the rank of $F$, $1<l<m$. 
%Up to reordering the indices we may assume that the parabolic weights of $F$ at $p_j$ are 
%$\{\beta^j_k\}$ for $k\in\{1,\ldots,l\}$. Then the parabolic slope of $F$ is 
%$$
%	\mu_{\para}(F) = \frac{\deg_{\para}(F)}{l} = \frac{\deg(F)+\sum_{j=0}^n\sum_{k=1}^l\beta^j_k}{l}
%$$
%and that of $E$ is 
%$$
%	\mu_{\para}(E) = \frac{\deg_{\para}(E)}{m} = \frac{\deg(E)+\sum_{j=0}^n\sum_{k=1}^m\beta^j_k}{m}. 
%$$
%Since $F$ is destabilising we have 
%$$
%	\mu_{\para}(F) > \mu_{\para}(E). 
%$$
%For $K>0$ adding $-K$ to one of the common weights (say $\beta^0_1$) modifies $\mu_{\para}(F)$ 
%by $-K/l$ and $\mu_{\para}(E)$ by $-K/m$. So, for $K$ sufficiently large the choice 
%$\tilde{\beta}^0_1 = \beta^0_1 - K$ instead of $\beta^0_1$ and all other $\tilde{\beta}^j_k = \beta^j_k$ 
%will ensure that $F$ is no longer destabilising. Denote the parabolic bundles with weights 
%$\tilde{\beta}^j_k$ by $\tilde{E}$ and $\tilde{F}$ respectively. 
%Now, one can apply induction on the parabolic bundle $\tilde{E}/\tilde{F}$.  

%For the statement about the tangent of (\ref{mp}) 

Let $(\alpha,\beta)\in\H^1(\mathcal{C})$ be a class in the tangent space at $(E,D)$ of $\Mod^0$ 
with $\alpha\in H^0(\Om^1(P)\otimes\End_{\iso}(E))$ and $\beta\in H^1(\End_{\para}(E))$ and 
consider the associated infinitesimal deformation $(E_{\varepsilon},D_{\varepsilon})$. 
Then $E_{\varepsilon}$ is the holomorphic bundle with $\bar{\partial}$-operator equal to 
$\bar{\partial}^E+\varepsilon\beta$ and $D_{\varepsilon}=D+\varepsilon\alpha$. 
The cocycle-condition says that $D_{\varepsilon}$ is a logarithmic connection on $E_{\varepsilon}$. 
Notice that (\ref{tmp}) maps $(\alpha,\beta)$ to $\beta$. 
%consists in forgetting the $(1,0)$-part of the infinitesimal modification. 
Therefore, to show that it is the tangent of (\ref{mp}) which forgets the connection $D$ there only  
remains to prove that for $\beta$-stable $E$ the tangent space $T_E\mathcal{P}_d(\{\beta^j_k\})$ 
is equal to $K$. This is essentially the content of Theorem 5.2 of \cite{ms}. 
%To see this, notice that by Lemma 
%\ref{lem:duality} $H^1(\Om^1(P) \otimes \End_{iso}(E))$ is the Serre-dual of $H^0(\End_{\para}(E))$, 
%which is $\C$ (consisting of constant endomorphisms) if $E$ is $\beta$-stable, and the same way 
%we obtain that $\H^2(\mathcal{C})\cong\C$, so that $K=H^1(\End(E)_{\para})$ from (\ref{exseq}); 
%now, an isomorphism class of infinitesimal deformations of $\mathcal{P}_d(\{\beta^j_k\})$ at $E$ is 
%just a $1$-cocycle composed of parabolic endomorphisms of $E$, up to equivalence given by coboundaries, 
%which yields precisely an element in $H^1(\End(E)_{\para})$. 
\end{proof}

We now come back to algebraic considerations; in what follows we do not use the stability condition on $E$ any more. 
Let us next show the duality statement which will be of fundamental importance in the rest of the paper. 

\begin{lem}\label{lem:duality}
For all $(E,D)\in\Mod$ Serre duality induces a $\C$-vector space isomorphism $K^{\vee}\cong C$. 
\end{lem}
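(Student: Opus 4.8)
The plan is to identify both $K$ and $C$ explicitly via the hypercohomology long exact sequence \eqref{exseq} and then to match them under Serre duality for the two-term complex $\mathcal{C}$. First I would recall that $K=\ker\big(H^1(D)\colon H^1(\End_{\para}(E))\to H^1(\Om^1(P)\otimes\End_{\iso}(E))\big)$ and $C=\coker\big(H^0(D)\colon H^0(\End_{\para}(E))\to H^0(\Om^1(P)\otimes\End_{\iso}(E))\big)$. The key observation is that the complex $\mathcal{C}$ is almost self-dual: the Serre dual of $\End_{\para}(E)$, placed in the appropriate degree, should be $\Om^1(P)\otimes\End_{\iso}(E)$, and vice versa. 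Concretely, the pairing $\varphi\otimes\eta\mapsto \tr(\varphi\eta)$ composed with the residue pairing $H^0(\Om^1(P)\otimes F)\times H^1(F^\vee)\to\C$ from Serre duality gives a perfect pairing between $H^0(\Om^1(P)\otimes\End_{\iso}(E))$ and $H^1(\End_{\iso}(E)^\vee)$. So the main point is to check that $\End_{\iso}(E)^\vee\cong\End_{\para}(E)$ as subsheaves of $\End(E)\otimes\Om^0$, i.e. that at each $p_j$ the annihilator, under the trace pairing, of the strictly-lower-triangular matrices (in the eigenbasis $v^j_1,\dots,v^j_m$) is exactly the lower-triangular matrices. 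This is the elementary linear-algebra fact that $\mathrm{tr}(\varphi\eta)=0$ for all strictly-lower-triangular $\eta$ iff $\varphi$ is lower-triangular, combined with a local computation of how the parabolic structure behaves under $\mathrm{Hom}(-,\O)$.

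Granting this sheaf-level duality $\End_{\iso}(E)^\vee\cong\End_{\para}(E)$ (and hence $\Om^1(P)\otimes\End_{\para}(E)\cong(\End_{\iso}(E))^\vee\otimes\Om^1(P)$, the Serre-dual twist), I would then dualize the map $H^1(D)$. Serre duality turns $H^1(D)\colon H^1(\End_{\para}(E))\to H^1(\Om^1(P)\otimes\End_{\iso}(E))$ into a map $H^0(\End_{\iso}(E)^\vee\otimes\Om^1(P))^{\vee\vee}\leftarrow\cdots$; more precisely, the transpose of $H^1(D)$ is identified with $H^0(D^\vee)\colon H^0(\End_{\para}(E))\to H^0(\Om^1(P)\otimes\End_{\iso}(E))$, where $D^\vee$ is the connection induced on the dual bundle. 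The sign/adjoint discrepancy between $D$ acting on $\End(E)$ and the induced $D$ on the dual of $\End(E)$ is harmless because $\ad_D$ on $\End(E)\cong\End(E)^\vee$ is (up to sign) self-adjoint for the trace form. Therefore $K=\ker(H^1(D))=\big(\mathrm{coker}(H^0(D))\big)^\vee=C^\vee$, which is the claim $K^\vee\cong C$.

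The steps in order: (i) establish the local linear-algebra duality between $\End_{\para}$ and $\End_{\iso}$ at each $p_j$, including the bookkeeping of the parabolic filtration under $\Hom(-,\O)$ so that the trace pairing lands in $\Om^1(P)$ rather than $\Om^1$; (ii) assemble this into a global isomorphism of sheaves $\End_{\iso}(E)^\vee\cong\End_{\para}(E)$; (iii) invoke Serre duality for the locally free sheaves involved to get perfect pairings $H^1(\End_{\para}(E))\times H^0(\Om^1(P)\otimes\End_{\iso}(E))\to\C$ and $H^1(\Om^1(P)\otimes\End_{\iso}(E))\times H^0(\End_{\para}(E))\to\C$; (iv) verify that under these pairings $H^1(D)$ and $H^0(D)$ are mutual transposes, so that $\ker$ and $\coker$ swap. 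I expect step (i), and in particular the precise matching of the twist by $\Om^1(P)$ with the duality of the parabolic weights/filtrations at the punctures, to be the main obstacle: one must be careful that it is the \emph{non-parabolic} dual (the sheaf $\Hom(\End_{\iso}(E),\O)$) rather than the parabolic dual that appears, and that the extra first-order pole in $\Om^1(P)$ is exactly what converts $\End_{\iso}$ into $\End_{\para}$ at the residue level. Everything else is formal diagram-chasing with the long exact sequence \eqref{exseq}.
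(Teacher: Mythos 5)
Your plan follows essentially the same route as the paper: the paper likewise computes the Serre dual of the two-term complex, checks in a local trivialisation diagonalising the residue at each puncture that (up to a transposition accounting for the trace form) the dual of $\End_{\para}(E)$ is $\Om^1(P)\otimes\End_{\iso}(E)$ and vice versa, and concludes that this identifies the shifted dual complex with $\mathcal{C}$ itself, so that Serre duality exchanges $\ker(H^1(D))$ with $\coker(H^0(D))$. The one point to tighten is that the sheaf-level identity you actually need is $\bigl(\O(P)\otimes\End_{\iso}(E)\bigr)^{\vee}\cong\End_{\para}(E)$ rather than $\End_{\iso}(E)^{\vee}\cong\End_{\para}(E)$ --- exactly the $\O(P)$-twist bookkeeping you yourself single out in step (i) as the delicate point.
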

Here and in all what follows, we use the standard bilinear form 
$$
   \langle A,B\rangle = \frac{1}{m}\tr (A^tB),
$$
where $A^t$ stands for the transpose of $A$, 
to identify the Lie algebra $\mathfrak{gl}(m)$ with its dual. 
It also induces orthogonal projection operators to all vector subspaces of $\mathfrak{gl}(m)$. 
\begin{proof}
%The basic idea behind the proof is that the complex (\ref{complex}) is very close to being self-dual, 
%and that we can control the difference from being so. 
%If it were self-dual, then Serre-duality would immediately imply the statement; we need to show that 
%the defect from its being self-dual doesn't modify the spaces $C$ and $K$ up to isomorphism. 
%This will be achieved by a careful study near the singular points. 

Let us compute the dual
\begin{equation}\label{dualcomplex}
    C^{-1} \xrightarrow{\ad_{D^t}} C^0 \tag{$\hat{\mathcal{C}}$}
\end{equation}
of (\ref{complex}), where the two non-zero terms are in degree $-1$ and $0$, 
and $D^t$ stands for the transpose of $D$. 
Observe that since duality changes the sign of the weights it inverts their order, 
so $D^t$ will be a parabolic map in (\ref{dualcomplex}). 
Let us determine the sheaves $C^i$. 
The dual of the sheaf $\End(E)$ is clearly $\End(E)$ itself, so away from the 
singular points both $C^{-1}$ and $C^0$ clearly coincide with $\End(E)$. 
Consider a singular point $p\in D$ and fix an arbitrary trivialisation of $E$ 
in a neighborhood of $p$ in which $\res_p(D)$ is diagonal. 
As we have seen, with respect to such a trivialisation $\End_{\para}(E)$ is the 
sheaf of endomorphisms of $E$ vanishing above the diagonal.  
Since the dual of the vanishing condition is having a simple pole, it follows that 
$C^0$ consists of local meromorphic endomorphism-valued $1$-forms near $p$ whose 
coefficients strictly above the diagonal have a pole of order at most $1$, and all 
other coefficients are regular. 
Similarly, since local sections of $\Om^1(P)\otimes\End_{\iso}(E)$ near $p$ are local 
meromorphic $1$-forms with strictly lower-triangular residue, it follows that 
$C^{-1}$ is composed of local holomorphic endomorphisms whose value at $p$ is 
upper triangular, in other words whose evaluation on the strictly lower-triangular 
coefficients vanishes. Now, one notices that $C^{-1}$ and $C^0$ are the transposes 
of $\End_{\para}(E)$ and $\Om^1(P) \otimes \End_{\iso}(E)$. This, together with the 
fact that the map in $\hat{\mathcal{C}}$ is the transpose of $D$, shows that 
transposition gives an isomorphism of complexes between $\hat{\mathcal{C}}[-1]$
and $\mathcal{C}$, so Serre duality implies the statement. 
\end{proof}

Hence, Lemma \ref{lem:duality} together with (\ref{exseq}) exhibits $\H^1(\mathcal{C})$ 
as the extension of two vector spaces of the same dimension: 
$$
    0 \to C \to \H^1(\mathcal{C}) \to K \to 0. 
$$ 
Let us show that the two factors of this extension are complex conjugate to each other. 
We first construct an anti-linear map from $C$ to $K$. 
An element of $C$ is a class of global sections of $\Om^1 (P) \otimes \End_{iso}(E)$, 
modulo the image of the map $H^0(D)$ induced by the connection on global sections of $\End(E)$. 
Let $\alpha' \d z$ represent such a class. 
Recall that we denote by $h$ the harmonic metric for the integrable connection and by $^*$ the operation of taking 
adjoint with respect to $h$. 
Then, one has $\coker(H^0(D))\isom \ker(H^0(D)^*)$, therefore there exists a unique global section 
$\alpha\d z \in \ker(H^0(D)^*)$ such that $\alpha\d z=\alpha'\d z + D f$ for some 
$f \in H^0(\End(E))$. %kivettem Laplace-definiciot. 
Then, since $\alpha \d z$ is of type $(1,0)$, it follows that 
\begin{align*}
   \Delta(\alpha\d z) & = \d_{\nabla}^* (\dbar^E \alpha\d z)+ \d_{\nabla} (D^*\alpha\d z) \\
       & = \nabla (D^*\alpha\d z) \\
       & = D (D^*\alpha\d z) \\
       & = 0. 
\end{align*}
The second equality in this sequence holds because $\alpha\d z$ is by assumption a 
global holomorphic form, the third one follows from the map 
$$
   H^0(D)^* : H^0(\Om^1 (P) \otimes \End_{iso}(E)) \to H^0(\End(E)), 
$$
and the last one is a consequence of the assumption $\alpha\d z \in \ker(H^0(D)^*)$. 
Hence, $\alpha\d z$ is an endomorphism-valued global harmonic $(1,0)$-form representing the class 
of $\alpha'\d z$. 
Take the adjoint of the endomorphism with respect to $h$ and the complex conjugate $1$-form: 
$\alpha^*\d \bar{z}$. 
According to the $L^2$ Dolbeault resolution (Lemma 9.1 and Theorem 5.1, \cite{biq}) 
it defines an element $[\alpha^*\d \bar{z}]$ of $H^1(\P,\End(E))$. 
Since $\Delta$ is a real operator, we deduce that 
\begin{align*}
   0=\Delta(\alpha^*\d \bar{z}) & =\d_{\nabla}^* (D \alpha^*\d \bar{z}) + 
         \nabla ((\dbar^E)^* \alpha^*\d \bar{z}) \\
      & = \d_{\nabla}^* (D \alpha^*\d \bar{z}), 
\end{align*}
where the second equality follows from $\dbar^E \alpha\d z=0$. 
By irreducibility of $D$, the $2$-form $D \alpha^*\d \bar{z}$ must be trivial, 
i.e. the cohomology class $[\alpha^*\d \bar{z}] \in H^1(\P,\End(E))$ is in $\ker(H^1(D))=K$. 
Call this cohomology class $\varsigma([\alpha\d z])$. Clearly, $\varsigma$ is then an anti-linear map 
from $C$ to $K$. In the next paragraph we will see that it is bijective. 

The map $\varsigma$ can be extended to define a conjugation on the whole tangent 
space $\H^1(\mathcal{C})$. 
Indeed, by Theorem 12.6 of \cite{biq} the hypercohomology $\H^1(\mathcal{C})$ is isomorphic to 
the first $L^2$-cohomology of $\nabla$ acting on endomorphisms of $E$ with
respect to $h$, or said differently, to the $L^2$-kernel of the Laplacian
$\Delta$ acting on endomorphisms. 
Let $\alpha\d z +\beta\d \bar{z}$ be the endomorphism-valued $L^2$ harmonic $1$-form 
representing a given class $a\in\H^1(\mathcal{C})$. 
Since the Laplacian is a real operator, the $1$-form $\beta^*\d z +\alpha^*\d \bar{z}$ 
is also harmonic, hence it represents a class in $\H^1(\mathcal{C})$. We declare this class 
to be $\varsigma(a)$. This map is clearly involutive, in particular bijective. 

The conjugation $\varsigma$ coupled with the symplectic structure induces a 
skew-Hermitian form $g$ on $\H^1(\mathcal{C})$ as follows. 
Let $\alpha_j\d z +\beta_j\d \bar{z}$ be the harmonic representatives of the classes $a_j\in \H^1(\mathcal{C})$ 
for $j \in \{1,2\}$. Then $g$ is defined by the formula 
\begin{align}\label{Hermitian}
    g(a_1,a_2)& = ia_1 \cup \varsigma(a_2) \\   
  & = i\int_{\P}\tr(\alpha_1\d z\wedge \alpha_2^*\d \bar{z}+\beta_1\d \bar{z}\wedge\beta_2^*\d z). \notag
\end{align}
Clearly, $C$ and $K$ are orthogonal complements of each other with respect to
this metric. 
Moreover, the form $ig$ is positive definite on $C$ and negative definite on $K$. 
In particular, we deduce the orthogonal decomposition of vector spaces 
\begin{align}\label{decomp}
     \H^1(\mathcal{C}) & = C \oplus K \\
              & = H^{1,0} \oplus H^{0,1}. \notag
\end{align}
which is just the decomposition of harmonic $1$-forms according to type. 
The second line of this decomposition together with $\varsigma$ %the real structure $\H^1_{\R}(D)$ 
define a weight $1$ Hodge structure on the tangent of $\Mod$, and the Hermitian 
metric (\ref{Hermitian}) induces a polarisation on it. 
In the next subsection we show that it admits the desired property. 

\subsection{Characterisation of deformations of the Fuchsian equation}\label{subsec:char}
In this subsection, we show that the Hodge structure defined in the previous subsection satisfies 
the property claimed in Theorem \ref{thm:main}. In all this part, we consider a point 
$(E_{\L},D_{\L})\in \E$, induced by the Fuchsian equation $\L$. 

First, notice that it is sufficient to show that for any Fuchsian equation 
$\L$ with the right exponents, the parabolic bundle $E_{\L}$ is always the same. 
Indeed, as we have seen in Subsection \ref{subsec:construction}, 
this then implies that an infinitesimal deformation of Fuchsian equations always 
lies inside the $(1,0)$-part $C$ of the Hodge structure (at least in the case where 
this parabolic bundle is stable). In different terms, we then have 
 $T_{D_{\L}}\E\subset C$. On the other hand, by Lemma \ref{lem:duality} 
and (\ref{equality}) the dimensions of the vector subspaces $T_{D_{\L}}\E$ and 
$C$ of  $T_{D_{\L}}\Mod$ are both of dimension $e$, so they must agree. 

Therefore, suppose we are given any polynomials $G_1,\ldots G_m$ in the variable $z$ such that 
$\deg(G_k)\leq k(n-1)$ and that the exponents of the corresponding equation $\L$ at any  
puncture $p_j$ are the numbers $\mu^j_1,\ldots,\mu ^j_m$ we fixed in advance. 
It is clear that the lattices (\ref{extension}) and (\ref{extinf}) are independent of the choice 
of such polynomials, just as the gluing matrices between them. On the other hand, the residue 
at any $p_j$ of $D_{\L}$ is equal to a modified companion matrix $A(p_j)$ where $A$ is defined in 
(\ref{a}). In particular, for any $1\leq k\leq m$ its $\mu^j_k$-eigenspace only depends on the 
residue, and so is also independent of the polynomials $G_k$ themselves. 
However, the eigenvalues at $p_j$ are assumed to be different, and the parabolic structure of 
$E$ is induced by the eigenvectors of the residue; hence, the parabolic structure is independent 
of the polynomials $G_k$ in the same trivialisation which was already seen to be independent of the 
choice of $G_k$. To sum up, the underlying parabolic bundle $E_{\L}$ is independent of the 
choice of the polynomials $G_k$ satisfying the property that the exponents of $\L$ at all $p^j$ are  
$\mu^j_1,\ldots,\mu ^j_m$. 

\subsection{Lagrangian property}\label{sec:Applications}
Let us denote by $I$ and $J$ the de Rham and Dolbeault complex structures on $\Mod$ respectively, 
and set $K=IJ$ \cite{hit}. Further, we denote by $\omega_I,\omega_J,\omega_K$ the associated 
symplectic structures and set $\Omega_I=-\omega_J+i\omega_K$. We then have: 
\begin{cor}\label{cor:etale}
$\E$ is a complex algebraic Lagrangian submanifold of $\Mod$ with respect to the holomorphic 
symplectic form $\Omega_I$. 
\end{cor}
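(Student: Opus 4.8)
The plan is to combine the main theorem, Theorem \ref{thm:main}, with the general fact that the $(1,0)$-part of a polarized weight $1$ Hodge structure sitting inside the tangent space of a holomorphic symplectic manifold is isotropic of half the dimension, hence Lagrangian. Concretely, by Theorem \ref{thm:main} we have $T_{D_{\L}}\E = H^{1,0} = C$ at every point $(E_{\L},D_{\L})\in\E$, so it suffices to show that $C\subset \H^1(\mathcal{C})$ is a Lagrangian subspace for the complex symplectic form $\Omega_I$ on $\Mod$, which by Proposition 6.1 of \cite{iis1} is the cup product on $\H^1(\mathcal{C})$. Since $\dim C = e$ and $\dim\H^1(\mathcal{C}) = c = 2e$ by \eqref{equality}, we only need to check that $\Omega_I$ restricts to zero on $C$.

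First I would recall from the construction in Subsection \ref{subsec:construction} that $C$ and $K$ are the orthogonal complements of each other with respect to the skew-Hermitian form $g(a_1,a_2) = i\, a_1\cup\varsigma(a_2)$ of \eqref{Hermitian}, and that $C$ is represented by endomorphism-valued harmonic $(1,0)$-forms $\alpha\d z$ while $K$ is represented by harmonic $(0,1)$-forms. The key computation is then: for $a_1,a_2\in C$ with harmonic representatives $\alpha_1\d z$, $\alpha_2\d z$, the cup product $a_1\cup a_2$ is represented by $\tr(\alpha_1\d z\wedge\alpha_2\d z)$, which vanishes pointwise because $\d z\wedge\d z = 0$. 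This shows $C$ is isotropic for $\Omega_I$, and being of dimension exactly half of $\dim_{\C}\Mod$, it is Lagrangian. The same argument shows $K$ is Lagrangian, consistent with the earlier remark that $K$ corresponds to deformations fixing the parabolic bundle.

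Next I would address the algebraicity and manifold claims. That $\E$ is an algebraic subvariety is Remark \ref{rk:moduli}: the space $\E$ of Fuchsian equations with fixed generic exponents is an affine space (see \cite{ince}), and the embedding $\E\hookrightarrow\Mod$ induced by $\L\mapsto (E_{\L},D_{\L})$ is algebraic and, by Proposition \ref{prop:gaugeequiv}, injective; since $e = \dim\E$ equals the rank of the tangent map $T_{D_{\L}}\E\to T_{D_{\L}}\Mod$ everywhere by Theorem \ref{thm:main}, the map is an immersion, so $\E$ is a smooth locally closed algebraic submanifold of $\Mod$. Finally, one should note that $\Omega_I = -\omega_J + i\omega_K$ is precisely the holomorphic symplectic form in complex structure $I$, so the Lagrangian statement is exactly the vanishing of the restriction of $\Omega_I$ established above.

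The main obstacle I anticipate is not the isotropy computation — that is essentially the observation $\d z\wedge\d z=0$ on harmonic representatives — but rather making rigorous the identification of $T_{D_{\L}}\E$ with $C=H^{1,0}$ uniformly over $\E$, together with the subtlety flagged in Subsection \ref{subsec:char} that the argument "deformations of Fuchsian equations land in $C$" was only fully justified when the underlying parabolic bundle $E_{\L}$ is stable. One must therefore either invoke the stability of $E_{\L}$ for the given generic weights (the computation in \cite{lss} in the rank $2$, four-point case suggests this holds generically, and it is tacitly assumed in the preceding proposition), or argue the dimension count $\dim T_{D_{\L}}\E = e = \dim C$ directly from Theorem \ref{thm:main}, which already asserts the equality $T_{D_{\L}}\E = H^{1,0}$ without the stability hypothesis. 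With that in hand, the Lagrangian property follows formally.
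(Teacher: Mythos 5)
Your proposal is correct and follows essentially the same route as the paper: algebraicity of the embedding $\E\hookrightarrow\Mod$ via the algebraic canonical family and the universal property, and the Lagrangian property from Theorem \ref{thm:main} identifying $T_{D_{\L}}\E$ with the $(1,0)$-forms, on which $\Omega_I(\phi_1,\phi_2)=\int\tr(\phi_1\wedge\phi_2)$ vanishes since $\d z\wedge\d z=0$. The only additions beyond the paper's argument are your explicit half-dimension count (which the paper leaves implicit via $c=2e$) and the discussion of the stability caveat, both of which are consistent with the text.
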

\begin{rk}
The Lagrangian property is proved independently in the recent work \cite{aidan} by Jonathan Aidan. 
His proof relies on matrix commutator computations, hence is different from ours. 
\end{rk}
\begin{proof}
Let us consider the canonical family 
\begin{equation}\label{canfam}
  (E_{\L},D_{\L})\longrightarrow\E
\end{equation} 
of logarithmic connections on $\P$ parameterised by $\E$. 
By construction (\ref{extension},\ref{conn},\ref{a},\ref{extinf}) the bundle 
$E_{\L}$ is independent of $\L$ and the connection matrix of $D_{\L}$ with 
respect to the independent trivialisation depends algebraically on the 
coefficients of the polynomials $G_1,\ldots,G_m$. 
It follows that the family (\ref{canfam}) is an algebraic family. 
By the universal property of $\Mod$, the inclusion map 
$\E\hookrightarrow\Mod$ is then algebraic. 

Recall that the tangent space of $\Mod$ is identified with endomorphism-valued $L^2$ harmonic $1$-forms. 
For such $1$-forms $\phi_1,\phi_2$ the holomorphic symplectic form of the de Rham moduli 
space can be written as 
$$
    \Omega_I(\phi_1,\phi_2) = \int_{\C} \tr (\phi_1 \wedge \phi_2). 
$$
Clearly this quantity vanishes if both $\phi_1$ and $\phi_2$ are of type $(1,0)$. 
\end{proof}

%\subsection{The number of apparent singularities} 

\section{Appendix: The proof of Proposition \ref{prop:gaugeequiv}}
\begin{proof}
Suppose there exists a gauge transformation $g \in \Hom(E_{\L_1},E_{\L_2})$ mapping $D_{\L_1}$ into $D_{\L_2}$. 
In the decompositions (\ref{holbdl}) of $E_{\L_1}$ and $E_{\L_2}$, $g$ can be written as a matrix whose 
entry in the $k$-th row and $l$-th column is a global holomorphic section of the sheaf $\O((k-l)(n-1))$. 
It follows that the matrix of $g$ is lower triangular, and that the entries on the diagonal are 
global sections of the trivial holomorphic line bundle over $\P$, hence constants. 
For $j=1,2$ let us write on the affine part $\C$ of $\P$ away from infinity the expressions 
$$
    D_{\L_j}=\d^{1,0}-\frac{A_j(z)}{\psi(z)} \d z.  
$$
It is then a well-known fact that the action of $g$ on $D_{\L_1}$ is 
$$
   g \cdot (D_{\L_1}) = \d^{1,0} - \frac{g^{-1}A_1(z)g}{\psi(z)} \d z 
     - g^{-1}d^{1,0}g.
$$
It follows from the above observations that $g^{-1}d^{1,0}g$ is strictly lower triagular.  
In particular, the entries on and above the diagonal in the matrices $A_1$ and $A_2$ must agree. 

Let us first consider the case $m=2$. Here, one has 
$$
   g = \begin{pmatrix}
       g_{11} & 0 \\
       g_{21} & g_{22}
       \end{pmatrix}, 
$$
where $g_{21}$ is a global section of $\O(n-1)$ and $g_{11},g_{22}$ are constants with $g_{11}g_{22}\neq 0$, 
and the inverse of this matrix is 
$$
   g^{-1} = \frac{1}{g_{11}g_{22}} 
       \begin{pmatrix}
       g_{22} & 0 \\
       -g_{21} & g_{11}
       \end{pmatrix}. 
$$
Furthermore, the matrices of the equations are 
$$
   A_j= \begin{pmatrix}
       0 & 1 \\
       G^j_2 & G^j_1 + \psi'
     \end{pmatrix}, 
$$
where $G^j_2, G^j_1$ are the coefficients of $\L_j$. 
Straightforward matrix multiplication yields 
$$
   g^{-1}A_1(z)g = \frac{1}{g_{11}g_{22}} 
    \begin{pmatrix}
      g_{21}g_{22} & g_{22}^2 \\
      * & * 
       \end{pmatrix}. 
$$
By the above, the terms in the first row must be equal to $0$ and $1$ respectively. 
We infer that $g_{21}=0$ and $g_{22}=g_{11}$, hence $g$ is a multiple of the identity. 

We now come to the general case. As the computations are more involved but 
of the same spirit, we only sketch the proof. The matrix $g$ is equal to 
$$
   g = \begin{pmatrix}
       g_{11} & 0 & \ldots & 0 \\
       g_{21} & g_{22}  & \ldots & 0 \\ 
       \vdots & & \ddots & \vdots \\
       g_{m1} & g_{m2} & \ldots & g_{mm} 
       \end{pmatrix}, 
$$
and its inverse is of the form 
$$
    g^{-1}=\frac{1}{g_{11}g_{22}\cdots g_{mm}}
      \begin{pmatrix}
      g_{22}\cdots g_{mm} & 0 & \ldots & 0 \\
      * & g_{11}g_{33}\cdots g_{mm}& \ldots & 0 \\
      \vdots & & \ddots & \vdots \\
      * & * & \ldots & g_{11}\cdots g_{m-1,m-1} 
      \end{pmatrix}. 
$$
One has 
\begin{align*}
   g^{-1}A_1(z)g & = \frac{1}{g_{11}g_{22}\cdots g_{mm}} \cdot \\
   & \begin{pmatrix}
      g_{22}\cdots g_{mm} & 0 & \ldots & 0 \\
      * & g_{11}g_{33}\cdots g_{mm}& \ldots & \vdots \\
      \vdots & & \ddots & 0 \\
      * & * & \ldots & g_{11}\cdots g_{m-1,m-1} 
   \end{pmatrix} \\ 
   & \begin{pmatrix}
       g_{21} & g_{22} & 0 & \ldots & 0 \\
       g_{21}\psi'+g_{31} & g_{22}\psi'+g_{32} & g_{33} & \ldots & \vdots \\ 
       \vdots & & \ddots & \ddots & 0 \\
       * & & & g_{m-1,m-1}\psi'+g_{m,m-1} & g_{mm}\\
       * & \ldots & & * & * 
   \end{pmatrix}.
\end{align*}
As the entries on and above the diagonal in this product have to be equal to those of the modified 
companion matrix $A_2(z)$, we deduce as before that $g_{11}=g_{22}=\cdots =g_{mm}$ and 
$g_{21}=g_{32}=\cdots =g_{m,m-1}=0$. 
It follows that right below the diagonal all the entries of the matrix $g^{-1}d^{1,0}g$ vanish. 
Considering now the first sub-diagonal in the product above, we deduce that $g_{31}=g_{42}=\cdots=g_{m,m-2}=0$.
It follows that on the second sub-diagonal of the matrix $g^{-1}d^{1,0}g$ all the entries vanish. 
Continuing this argument, we eventually obtain that all the $g_{kl}$ for $k>l$ must vanish. 
This concludes the proof. 
\end{proof}

\bibliographystyle{alpha}
\bibliography{deformation-theory}

\begin{thebibliography}{LSS10}

\bibitem[Aid]{aidan}
Jonathan Aidan.
\newblock {\em Propri\'et\'es symplectiques de l'espace des \'equations
  differentielles dans l'espace des syst\`emes logarithmiques}.
\newblock PhD thesis, Universit\'e Pierre et Marie Curie, Paris.
\newblock In preparation.

\bibitem[BB04]{biqboa}
Olivier Biquard and Philip Boalch.
\newblock Wild non-abelian {H}odge theory on curves.
\newblock {\em Compositio Mathematica}, 140(1):179--204, 2004.

\bibitem[BD05]{bei-drin}
Alexander Beilinson and Vladimir Drinfeld.
\newblock Opers.
\newblock arXiv:math/0501398, 2005.

\bibitem[Biq97]{biq}
Olivier Biquard.
\newblock Fibr\'es de {H}iggs et connexions int\'egrables: le cas logarithmique
  (diviseur lisse).
\newblock {\em Annales scientifiques de l'\'Ecole Normale Sup\'erieure},
  30(4):41--96, 1997.

\bibitem[Del70]{del}
Pierre Deligne.
\newblock {\em \'Equations diff\'erentielles \`a points singuliers
  r\'eguliers}, volume 163 of {\em Lecture Notes in Mathematics}.
\newblock Springer-Verlag, 1970.

\bibitem[Hit87]{hit}
Nigel~J. Hitchin.
\newblock The self-duality equations on a {R}iemann surface.
\newblock {\em Proceedings of the London Mathematical Society}, 55(3):59--126,
  1987.

\bibitem[Hit92]{hit-teich}
Nigel~J. Hitchin.
\newblock Lie groups and {T}eichm\"uller space.
\newblock {\em Topology}, 31(3):449--473, 1992.

\bibitem[IIS06]{iis1}
Michi-Aki Inaba, Katsunori Iwasaki, and Masa-Hiko Saito.
\newblock Moduli of stable parabolic connections, {R}iemann-{H}ilbert
  correspondence and geometry of {P}ainlev\'e equation of type {VI}, part {I}.
\newblock {\em Publications of the Research Institute for Mathematical
  Sciences}, 42(4), 2006.

\bibitem[Ina06]{ina}
Michi-Aki Inaba.
\newblock Moduli of parabolic connections on a curve and {R}iemann-{H}ilbert
  correspondence.
\newblock arXiv:math/0602004, 2006.

\bibitem[Inc26]{ince}
E.~L. Ince.
\newblock {\em Ordinary Differential Equations}.
\newblock Dover Publications, New York, 1926.

\bibitem[Kat96]{ka}
Nicholas~M. Katz.
\newblock {\em Rigid Local Systems}.
\newblock Number 139 in Annals of Mathematics Studies. Princeton University
  Press, 1996.

\bibitem[LSS10]{lss}
Frank Loray, Masa-Hiko Saito, and Carlos Simpson.
\newblock Foliations on the moduli space of rank two connections on the
  projective line minus four points.
\newblock arXiv:1012.3612, 2010.

\bibitem[MS80]{ms}
V.~B. Mehta and C.~S. Seshadri.
\newblock Moduli of vector bundles on curves with parabolic structures.
\newblock {\em Mathematische Annalen}, 248:205--239, 1980.

\bibitem[Nit93]{nit}
Nitin Nitsure.
\newblock Moduli of semistable logarithmic connections.
\newblock {\em Journal of the American Mathematical Society}, 6:597--609, 1993.

\bibitem[SS]{ssz}
M-H. Saito and Sz. Szab\'o.
\newblock Apparent singularities and canonical coordinates for moduli of
  connections.
\newblock In preparation.

\bibitem[Sza08]{Sz}
Szil\'ard Szab\'o.
\newblock The extension of a {F}uchsian equation onto the projective line.
\newblock {\em Acta Scientiarum Mathematicarum (Szeged)}, 74:557--564, 2008.

\end{thebibliography}

\end{document}